\newcommand{\R}{\mathbb{R}}
\newcommand{\N}{\mathbb{N}}
\newcommand{\Bc}{\mathcal{B}}
\newcommand{\Gc}{\mathcal{G}}
\newcommand{\Wc}{\mathcal{W}}
\DeclareMathOperator{\pnt}{\raise 0.5mm \hbox{\large\bf.}}
\DeclareMathOperator{\cone}{cone}
\DeclareMathOperator{\GL}{GL}
\DeclareMathOperator{\inom}{in}
\DeclareMathOperator{\gT}{gT}
\DeclareMathOperator{\gin}{gin}
\DeclareMathOperator{\rank}{rank}
\DeclareMathOperator{\wt}{wt}
\DeclareMathOperator{\GF}{GF}
\DeclareMathOperator{\gGF}{gGF}
\DeclareMathOperator{\charac}{char}
\newtheorem{theorem}{\bf Theorem} [section]
\newtheorem{lemma}[theorem]{\bf Lemma}
\newtheorem{cor}[theorem]{\bf Corollary}
\newtheorem{prop}[theorem]{\bf Proposition}
\theoremstyle{definition}
\newtheorem{defn}[theorem]{\bf Definition}
\newtheorem{rem}[theorem]{\bf Remark}
\newtheorem{notation}[theorem]{\bf Notation}
\theoremstyle{plain}
\newtheorem*{satz*}{Theorem}
\title{Generic Tropical Varieties}
\author{Tim R\"omer}
\address{Universit\"at Osnabr\"uck, Institut f\"ur Mathematik, 49069 Osnabr\"uck, Germany}
\email{troemer@uos.de}
\author{Kirsten Schmitz}
\address{Universit\"at Osnabr\"uck, Institut f\"ur Mathematik, 49069 Osnabr\"uck, Germany}
\email{kischmit@uos.de}
\begin{document}
\begin{abstract}
We show that in the constant coefficient case the generic tropical variety of a graded ideal exists. This can be seen as the analogon
to the existence of the generic initial ideal in Gr\"obner basis theory. We determine the generic tropical variety as a set in general and as a fan for principal ideals and linear ideals.
\end{abstract}


\maketitle
\section{Introduction}

The field of tropical geometry is a growing branch of mathematics
establishing a deep connection between algebraic geometry and
combinatorics. There are various different approaches and
applications of tropical geometry; see  \cite{DEST, GAMA, MI1, SPST} and for general overviews see \cite{GA, ITMISH}.

One important aspect of tropical geometry is that it provides a tool
to investigate affine algebraic varieties by studying certain
combinatorial objects associated to them. This is done by
considering the image of an affine algebraic variety $X$ under a
valuation map; see \cite{DR,JEMAMA,SPST}. The set of real-valued
points of this image is defined to be the tropical variety of $X$
or, equivalently, of the ideal $I$ defining $X$. The tropical
variety has the structure of a polyhedral complex in $\R^n$ and can
be used to obtain information of the original variety as is done for
example in \cite{DR}. For practical purposes there is a useful
characterization of tropical varieties in terms of initial
polynomials given in \cite{SPST} and fully proved in \cite[Theorem
4.2]{DR} and more explicitly in \cite{JEMAMA}. From this it follows
that in the case of constant coefficients, i.e.\ if the valuation on
the ground field is trivial, the tropical variety of an algebraic
variety defined by a graded ideal $I$ is a subfan of the Gr\"obner
fan of $I$. It contains exactly those cones of the Gr\"obner fan
corresponding to initial ideals that do not contain a monomial.

Let $K$ be an infinite field, $I\subset S_K=K[x_1,\ldots,x_n]$ a graded ideal and
$\succ$ a term order. It is well known that there exists a generic
initial ideal $\gin_{\succ}(I)$ with respect to $\succ$. More
precisely, there is a non-empty Zariski-open set $U\subset \GL_n(K)$
such that $\inom_{\succ}(g(I))$ is the same ideal for every $g\in
U$. This will be made precise in Definition \ref{gendef}; see also \cite{EI} or \cite{GR} for details and see for example
\cite{BAST, HE:survey} for applications of this concept in algebraic
geometry and commutative algebra. Since the tropical variety of $I$
is closely related to the Gr\"obner fan of $I$ and thus to initial
ideals of $I$, the question arises, whether there exists a generic
tropical variety of $I$ analogous to $\gin_{\succ}(I)$ and what
properties it has.

Our aim is to study the tropical variety of a graded ideal under a generic coordinate transformation. We prove the existence of a generic Gr\"obner fan and a generic tropical variety in the case of constant coefficients. Moreover, we explicitly describe the generic tropical variety of an ideal as a set. This set only depends on the dimension $m$ of the coordinate ring $S_K/I$. It is equal to the support of the $m$-skeleton $\Wc_n^m$ of one particular fan $\Wc_n$ in $\R^n$ (see Definition \ref{gentropfandefn}). The following main results of this paper are restated in Corollary \ref{gengrobi} and Theorem \ref{tropfansetcor}.

\begin{theorem}
Let $I\subset S_K=K[x_1,\ldots,x_n]$ be a graded ideal with $\dim(S_K/I)=m$. Then there exists a Zariski-open subset $\emptyset\neq U\subset \GL_(K)$, such that
\begin{enumerate}
\item the Gr\"obner fan $\GF(g(I))$ of the ideal $g(I)$ is the same fan for every $g\in U$,
\item the tropical variety $T(g(I))$ of $g(I)$ is the same fan for every $g\in U$ and this fan is supported by the underlying set of $\Wc_n^m$. In addition, every ideal has a generic tropical basis.
\end{enumerate}
\end{theorem}

The latter result yields a way to associate a non-empty tropical variety to an ideal of dimension at least one,
even if it contains a monomial. This opens the possibility to study such ideals by means of tropical varieties as well.  Note that the existence of a generic tropical variety highly depends on the fact that we use the constant coefficient case. The existence result is false in the general setting; see Remark \ref{exnonconstfalse}.

Our paper is organized as follows. In Section 2 we will introduce our notation and the basic setting for our work. In Section 3 we
present a proof of the existence of the generic Gr\"obner fan in this setting. Section 4 contains the proof of the main theorem regarding generic tropical varieties. In the last Section the example classes of principal ideals and linear ideals are discussed. We refer to \cite{ROSC} for further results on generic tropical varieties, like the relationship between the multiplicity of a generic tropical variety (see, e.g., \cite{DIFEST} or \cite{STTE} for the definition) and the multiplicity of the defining ideal.

We thank Hannah Markwig and Bernd Sturmfels for valuable comments and suggestions for this paper.

\section{Basic Concepts and Notation}\label{basicconcepts}

In this section we present some results and recall definitions which are used in the subsequent sections. Let $K$ be an infinite field. In general, for the purposes of tropical geometry $K$ is equipped with a non-archimedean valuation $v\colon K \to \R \cup \{\infty\}$, which induces the transition map between classical and tropical varieties. In this note we only consider the constant coefficient case, i.e.\ that $v(a)=0$ for all $a\in K^*$. This reduces the tropical geometry in our setting to the study of Gr\"obner fans (at least in characteristic 0); see Remark \ref{exnonconstfalse} for a hint at the general situation.
Note that the definition of a tropical variety as given below works in any characteristic and for the results of this paper only $|K|=\infty$ is required.

We will denote the polynomial ring in $n$ variables over $K$ by $S_K$. For a polynomial $f\in S_K$ with $f=\sum_{\nu\in\N^n} a_{\nu} x^{\nu}$ and $\omega\in\R^n$ we denote by $\inom_{\omega}(f)$ the
\emph{initial polynomial of $f$}, which consists of all terms of
$f$ such that $\omega\cdot\nu$ is minimal. Note that our definition is slightly different from the original one in the context of Gr\"obner basis theory, since for a given polynomial we always take terms of lowest $\omega$-weight, while one usually takes terms of maximal $\omega$-weight. However, this does not change the theory at all for the case of graded ideals. We use the above definition, since it is consistent with the definition of initial forms in the non-constant coefficient case. If the valuation on $K$ is non-trivial, the valuations of the coefficients $a_{\nu}$ are taken into account in the definition of $\inom_{\omega}(f)$, see \cite{DR} or \cite{SPST} for two such variations.

The \emph{tropical variety} $T(I)$ of a graded ideal $I\subset S_K$ is the set of all $\omega\in \R^n$ such that the minimal weight of the terms of $f$ is attained at least twice for all $f\in I$.
In other words, we have
$$
T(I)=\left\{\omega\in \R^n: \inom_{\omega}(f) \text{ is not
a monomial for every } f\in I\right\}.
$$
If $I=(f)$ is principal, we also write $T(f)$ for $T(I)$.

In the constant coefficient case the tropical variety of an ideal has a natural fan structure. Recall that a fan $\mathcal{F}$ in $\R^n$ is a finite collection of (polyhedral) cones in $\R^n$ such that for $C' \subset C$ with $C\in \mathcal{F}$ we have that $C'$ is a face of $C$ if and only if $C' \in \mathcal{F}$, and secondly if $C,C' \in \mathcal{F}$, then $C\cap C'$ is a common face of $C$ and $C'$. To simplify notation we denote by $\mathcal{F}$ also the union of all its cones. The dimension $\dim \mathcal{F}$ of $\mathcal{F}$ is the maximum of the dimensions $\dim C$ for all cones $C\in \mathcal{F}$ in the usual topology of $\R^n$. We call the fan \emph{pure-dimensional} if every maximal cone has the same dimension $\dim \mathcal{F}$.

In the following we will always assume $I$ to be a graded ideal with $I\neq\left\{ 0\right\}$, if not stated otherwise. Recall in this situation the notion of the \emph{Gr\"obner fan $\GF(I)$} of  $I$; see for example \cite{MATH}, \cite{MORO} or \cite{ST}. For $\omega\in \R^n$ we let $\inom_{\omega}(I)$ be the ideal generated by all $\inom_{\omega}(f)$ for $f \in I$. Two vectors $\omega,\omega'\in \R^n$ are elements of the same relatively open cone $\mathring{C}$ for $C \in \GF(I)$ if and only if $\inom_{\omega}(I)=\inom_{\omega'}(I)$. Then we set $\inom_{C}(I)$ for this common initial ideal.

It was observed in \cite{ST02} that the tropical variety $T(I)$ is a subfan of the Gr\"obner fan of $I$ in a natural way (see also \cite{BOJESPSTTH}). More precisely, we have:

\begin{prop}
\label{prop:gfan}
The tropical variety $T(I)$ of a graded ideal $I\subset S_K$ is the subfan of the Gr\"obner fan $\GF(I)$ which contains all cones $C \in\GF(I)$ such that the corresponding initial ideal $\inom_C (I)$ contains no monomial.
\end{prop}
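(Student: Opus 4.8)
The plan is to show that $T(I)$ and the union of those Gröbner cones $C$ with $\inom_C(I)$ monomial-free coincide as subsets of $\R^n$, and that this union is automatically a subfan. The crucial input is the constant-coefficient hypothesis: when $v$ is trivial, $\inom_\omega(f)$ depends only on the usual $\omega$-grading, so $\inom_\omega(I)$ is constant on each relatively open Gröbner cone $\mathring{C}$, and hence whether $\inom_\omega(I)$ contains a monomial is a property of the cone $C$, not of the individual $\omega$. This immediately gives that the set $\{\omega : \inom_\omega(I) \text{ contains a monomial}\}$ is a union of relatively open cones of $\GF(I)$, so its complement is the union of the closed cones $C$ with $\inom_C(I)$ monomial-free, which is a subfan since $\GF(I)$ is a fan and we are taking a full subcollection of its cones (closed under taking faces, because a face of such a $C$ has initial ideal containing $\inom_C(I)$, hence still monomial-free).

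The heart of the argument is the set-theoretic identity
$$
T(I) = \bigcup_{\substack{C \in \GF(I) \\ \inom_C(I) \text{ monomial-free}}} C.
$$
First I would unwind the definition: $\omega \in T(I)$ means $\inom_\omega(f)$ is not a monomial for every $f \in I$. One inclusion is the subtle one. Suppose $\omega$ lies in a cone $C$ with $\inom_C(I) = \inom_\omega(I)$ containing no monomial; I must deduce that no single $\inom_\omega(f)$ is a monomial. If some $\inom_\omega(f) = c x^\nu$ were a monomial, then $x^\nu \in \inom_\omega(I)$, contradicting monomial-freeness — so this direction is actually straightforward once one observes that $\inom_\omega(f) \in \inom_\omega(I)$ by definition. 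The reverse inclusion: if $\inom_\omega(I)$ contains a monomial $x^\nu$, I need to produce an $f \in I$ with $\inom_\omega(f)$ an actual monomial. This is the step I expect to be the main obstacle, because $x^\nu \in \inom_\omega(I)$ only says $x^\nu = \sum_i h_i \inom_\omega(g_i)$ for some $g_i \in I$, and a priori the $\omega$-initial form of $\sum_i (\text{lift of } h_i) g_i$ could have cancellation of the top-weight part or could fail to reduce to the single monomial $x^\nu$.

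To overcome this I would invoke the standard theory of the Gröbner fan in the constant-coefficient setting: for $\omega$ in the relative interior of a maximal cone, $\inom_\omega(I)$ is a monomial ideal exactly when it contains a monomial (one can perturb $\omega$ within the cone without changing $\inom_\omega(I)$, and a homogeneous ideal that contains a monomial but is generated in a compatible way still corresponds to a genuine monomial appearing as an initial form), and the key fact — which is exactly the content established in \cite{SPST}, \cite[Theorem 4.2]{DR} and \cite{JEMAMA}, and which the paper has already cited as the characterization of tropical varieties via initial polynomials — is precisely that $\omega \notin T(I)$ if and only if $\inom_\omega(I)$ contains a monomial. So the cleanest route is: cite this characterization for the pointwise statement $T(I) = \{\omega : \inom_\omega(I) \text{ contains no monomial}\}$, then use triviality of $v$ to promote ``depends on $\omega$'' to ``depends on the Gröbner cone of $\omega$'', and finally check the face-closure property to conclude that the resulting union of cones is a subfan of $\GF(I)$. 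I would close by remarking that this is essentially the observation of \cite{BOJESPSTTH}, so the proof is a matter of assembling these known ingredients rather than proving something new from scratch.
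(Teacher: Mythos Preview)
The paper does not give its own proof of this proposition: it is stated as a result quoted from \cite{BOJESPSTTH} and used as a black box throughout. So there is nothing in the paper to compare your sketch against, and your instinct that ``the proof is a matter of assembling these known ingredients'' is exactly how the paper treats it.

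Your outline is correct in its overall shape, but the justification you give for closure under faces is wrong on two counts. You claim that a face $C'$ of $C$ has initial ideal \emph{containing} $\inom_C(I)$, and that this forces $\inom_{C'}(I)$ to be monomial-free. First, there is in general no inclusion between $\inom_{C'}(I)$ and $\inom_C(I)$ (take $I=(x+y)$ and compare the initial ideals for $\omega=(0,0)$ and $\omega=(0,1)$). Second, even if the inclusion held, a superset of a monomial-free ideal can certainly contain a monomial, so the implication you draw would fail. The correct argument runs in the opposite direction: for $\omega \in \mathring{C}$ one has $\inom_C(I)=\inom_\omega(\inom_{C'}(I))$, and since $\inom_\omega(x^\nu)=x^\nu$, any monomial in $\inom_{C'}(I)$ would survive into $\inom_C(I)$. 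Contrapositively, $\inom_C(I)$ monomial-free forces $\inom_{C'}(I)$ monomial-free, which is the face-closure you need.
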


The next basic result on tropical varieties is a direct consequence of the definition.

\begin{lemma}\label{tropbasics}
Let $I,J\subset S_K$ be graded ideals with $I\subset J$. If we consider the tropical varieties of $I$ and $J$ as sets, we have $T(J)\subset T(I)$. In particular, for a homogeneous polynomial $f\in I$ we have $T(I)\subset T(f)$.
\end{lemma}

To compute tropical varieties the concept of a tropical basis is useful. Let $I\subset S_K$ be a graded ideal. Then a finite
system of homogeneous generators $f_1,\ldots,f_t$ of $I$ is called a \emph{tropical basis} of $I$ if $$T(I)=\bigcap_{i=1}^t T(f_i).$$

Every ideal has a tropical basis. See, e.g., \cite[Theorem 2.9]{BOJESPSTTH} for the constant coefficient case and \cite{HETH} for the general case.

We will now specify the meaning of the term \emph{generic} for this note and introduce the notation used here.

\begin{defn}\label{gendef}
Let $G=\left\{y_{ij}: i,j=1,\ldots,n\right\}$ be a set of $n^2$
independent variables over some field $K$ and let $K'=K(G)$ be the
quotient field of $K[G]$. In the following we denote by $y$ the
$K$-algebra homomorphism
\begin{eqnarray*}
y: K[x_1,\ldots,x_n] & \longrightarrow & K'[x_1,\ldots, x_n]\\
                 x_i & \longmapsto     & \sum_{j=1}^n y_{ij} x_j.
\end{eqnarray*}
For any $g=(g_{ij})\in \GL_n(K)$ this induces a $K$-algebra automorphism on
$K[x_1,\ldots,x_n]$ by substituting $g_{ij}$ for $y_{ij}$. We identify $g$ with the induced
automorphism and use the notation $g$ for both of them.
\end{defn}

\begin{notation}\label{noty}
A polynomial $f\in K'[x_1,\ldots,x_n]$ will sometimes be denoted as $f(y)$ to emphasize its dependence on the variables $y_{ij}\in G$. Let $f(y)\in K'[x_1,\ldots,x_n]$ and $g\in \GL_n(K)$ such that no denominator in the coefficients of the monomials $x_1^{\nu_1}\ldots x_n^{\nu_n}$ vanishes when the $g_{ij}$ are substituted for the $y_{ij}$. Then we will denote the polynomial in $K[x_1,\ldots,x_n]$ obtained by this substitution by $f(g)$.
\end{notation}

The \emph{dimension} $\dim (S_K/I)$ for an ideal $I\subset S_K$ always refers to the Krull dimension of the coordinate ring $S_K/I$.  Note that for any $g\in\GL_n(K)$ the ideal $g(I)$ is a graded ideal of the same dimension as $I$. If $\dim(S_K/I)>0$, generically the tropical variety of $I$ is non-empty.

\begin{lemma}\label{dimo}
Let $I\subset S_K$ be a graded ideal with $\dim (S_K/I)>0$. Then there exists a Zariski-open set
$\emptyset\neq U\subset \GL_n(K)$ such that $T(g(I))\neq \emptyset$ for every $g\in U$.
\end{lemma}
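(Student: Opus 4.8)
The plan is to reduce the claim to an elementary genericity statement about $V(I)$. The key preliminary observation is that, in the constant coefficient case, a graded ideal $J\subset K[x_1,\dots,x_n]$ satisfies $T(J)\neq\emptyset$ if and only if $J$ contains no monomial. Indeed, since the valuation is trivial, $\inom_0(f)=f$ for every $f$ (all terms have weight $0$), so $0\in T(J)$ exactly when no element of $J$ is a monomial; conversely, if $x^\nu\in J$ then $\inom_\omega(x^\nu)=x^\nu$ for every $\omega$, forcing $T(J)=\emptyset$. Applying this with $J=g(I)$, it suffices to produce a non-empty Zariski-open set $U_{>0}\subset\GL_n(K)$ such that $g(I)$ contains no monomial for every $g\in U_{>0}$.

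Next I would translate ``$g(I)$ contains a monomial'' into geometry. Because $K$ is algebraically closed, the Nullstellensatz shows that $g(I)$ contains a monomial if and only if some power of $x_1\cdots x_n$ lies in $g(I)$, i.e.\ if and only if $V(g(I))\subseteq V(x_1\cdots x_n)=\bigcup_{i=1}^n\{x_i=0\}$. Since $(gf)(a)=f(ga)$ for $a\in K^n$, one has $V(g(I))=g^{-1}(V(I))$, so $g(I)$ contains no monomial precisely when $g^{-1}(V(I))\cap(K^*)^n\neq\emptyset$; that is, when the transformed variety meets the torus.

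To see that this holds generically, observe that $\dim I\ge 1$ forces $V(I)\supsetneq\{0\}$ (if $V(I)=\{0\}$ then $\sqrt I=(x_1,\dots,x_n)$ and $\dim I=0$), so we may fix a point $p\in V(I)$ with $p\neq 0$ and put
$$U_{>0}=\{g\in\GL_n(K): g^{-1}(p)\in(K^*)^n\}.$$
Each coordinate of $g^{-1}(p)$ is, up to the unit $1/\det g$, a polynomial in the entries of $g$, so $U_{>0}$ is Zariski-open in $\GL_n(K)$; and $U_{>0}\neq\emptyset$ because $\GL_n(K)$ acts transitively on $K^n\setminus\{0\}$, so one can choose $g$ with $g^{-1}(p)=(1,\dots,1)$. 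For any $g\in U_{>0}$ the point $g^{-1}(p)$ lies in $g^{-1}(V(I))\cap(K^*)^n$, hence $g(I)$ contains no monomial and therefore $T(g(I))\neq\emptyset$ by the first step.

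I do not expect a genuine obstacle here; the only points requiring care are the equivalence ``$T(J)\neq\emptyset$ $\iff$ $J$ contains no monomial'' in the sign convention used in this paper, and the routine check that $U_{>0}$ is simultaneously Zariski-open and non-empty. (One could alternatively deduce the lemma from the existence of $\gin_\succ(I)$ together with the specialization machinery developed later, but the direct argument above is shorter and self-contained.)
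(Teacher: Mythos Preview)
Your argument is correct and follows essentially the same route as the paper: reduce to showing that $g(I)$ contains no monomial, translate this into the existence of a point of $V(g(I))=g^{-1}(V(I))$ in the torus, pick a nonzero $p\in V(I)$ (using $\dim I>0$), and take $U_{>0}=\{g:g^{-1}(p)\in(K^*)^n\}$. The only cosmetic differences are that the paper argues the ``no monomial'' direction directly (without invoking the full Nullstellensatz) and writes the coordinates of $g^{-1}(p)$ explicitly as linear forms in the entries of $g^{-1}$, whereas you appeal to Cramer's rule and transitivity of $\GL_n(K)$ on $K^n\setminus\{0\}$ for non-emptiness.
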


\begin{proof}
We have to show that $g(I)$ contains no monomial for all $g$ in a
non-empty Zariski-open set $U\subset \GL_n(K)$. If $g(I)$
contains a monomial $x^\alpha$ for a fixed $g$, we would have
$(x^\alpha)\subset g(I)$, which implies the inclusions
$$V(g(I))\subset V(x^\alpha)=\left\{z\in K^n: z_i=0 \text{ for }\alpha_i>0\right\}$$
of the zero-sets of the two ideals. Thus it suffices to
show that there is a zero of $g(I)$, none of whose coordinates is
zero to show that no monomial can be contained in $g(I)$.

If $I=(f_1,\ldots,f_r)$, then $g(I)=(g(f_1),\ldots, g(f_r))$. Since
$g\in\GL_n(K)$, we can also consider it as a vector space
isomorphism on $K^n$. Let $g^{-1}$ denote its inverse. Then by
definition $g(f_i)(v)=f_i(g(v))$ for any $v\in K^n$. Thus for any
$z\in V(I)$ we get $$g(f_i)(g^{-1}(z))=f_i(g(g^{-1}(z)))=f_i(z)=0,$$
so $g^{-1}(z)\in V(g(I))$.

Since $\dim (S_K/I)>0$, we know $\sqrt{I}\neq(x_1,\ldots,x_n)$. In particular,
there exists $0\neq z\in V(I)$ because we are assuming that $K$ is algebraically closed. Now the $i$-th coordinate
$(g^{-1}(z))_i$ is zero if and only if $\sum_{j=1}^n g'_{ij}z_j=0$, where
the $g'_{ij}$ are the entries of the matrix of $g^{-1}\in \GL_n(K)$.
This sum can be considered as a non-zero polynomial in the variables
$g'_{ij}$ with coefficients $z_j$. Now we can choose $U$ to be
the set $$U=\left\{g\in \GL_n(K): \sum_{j=1}^n g'_{ij}z_j\neq0
\text{ for } i=1,\ldots,n\right\},$$ which is non-empty and
Zariski-open. Then for any $g\in U$ we have $g^{-1}(z)\in
V(g(I))\cap (K^*)^n$, so $g(I)$ cannot contain a monomial. Hence,
$T(g(I))\neq\emptyset$ for $g\in U$.
\end{proof}

Let $\succ$ be a term order on $S_K=K[x_1,\ldots,x_n]$ with $x_1\succ x_2\succ\ldots\succ x_n$. Then the initial ideal of some ideal $I\subset S_K$ with respect to $\succ$ is constant under a generic coordinate transformation of $I$. In other words there is a
Zariski-open set $\emptyset\neq U\subset \GL_n(K)$ such that
$\inom_{\succ}(g(I))$ is the same ideal for every $g\in U$,
and this ideal is denoted by $\gin_{\succ}(I)$.

Let $B_n(K)\subset \GL_n(K)$ denote the Borel subgroup of $\GL_n(K)$,
i.e.\ all upper triangular matrices in $\GL_n(K)$. Then
for every $g\in B_n(K)$ we have
$g^T(\gin_{\succ}(I))=\gin_{\succ}(I)$, where $g^T$ is the transposed matrix of $g$. This fact is expressed by calling
$\gin_{\succ}(I)$ \emph{Borel-fixed}. In the case that $\charac(K)=0$ this condition is equivalent to $\gin_{\succ}(I)$ being \emph{strongly stable}; see \cite[Theorem 15.23]{EI}. This means that for any index $i\in\left\{1,\ldots,n\right\}$ and any monomial $x^{\nu}\in\gin_{\succ}(I)$ which is divisible by $x_i$, also the monomial $(x_j/x_i)x^{\nu}$ is in $\gin_{\succ}(I)$. This condition will be used repeatedly in the following explaining our assumption $\charac(K)=0$.

As explained above the tropical variety of $I$ is a subfan of the Gr\"obner fan of $I$ and thus closely related to initial ideals of $I$. This leads to the question, whether there exists a generic tropical variety of $I$ analogous to $\gin_{\succ}(I)$ and what it looks like, if it does exist.

\begin{defn}\label{tropdef}
Let $I\subset S_K$ be a graded ideal. If $T(g(I))$ is the same fan for all $g$ in a Zariski-open subset $\emptyset\neq U\subset \GL_n(K)$, then this fan is called the \emph{generic tropical variety} of $I$ and is denoted by $\gT(I)$.
\end{defn}

Note that every graded ideal $I\subset S_K$ with $\dim (S_K/I)=0$ contains a monomial. Thus Lemma \ref{dimo} immediately implies that we have $\gT(I)=\emptyset$ if and only if $\dim (S_K/I)=0$.

The \emph{support} of a polynomial $f$ is the finite set of all exponent vectors of $f$. More generally, the \emph{support} of a finite set $\Gc$ of polynomials is the union of the support-sets of every polynomial in $\Gc$. We would like to obtain tropical bases of $g(I)$ with the same support for all $g$ in some non-empty open subset of $\GL_n(K)$. This idea is captured in the next definition.

\begin{defn}\label{gentropbasis}
Let $I\subset S_K=K[x_1,\ldots,x_n]$ be a graded ideal. A finite set $\left\{f_1(y),\ldots,f_s(y)\right\}$ of polynomials in $y(I)$ is called a \emph{generic tropical basis of $I$}, if there is an open subset $\emptyset\neq U\subset \GL_n(K)$ such that $\left\{f_1(g),\ldots,f_s(g)\right\}$ is a tropical basis of $g(I)$ with the same support for every $g\in U$. If an open set $\emptyset\neq U\subset \GL_n(K)$ fulfills this condition, the generic tropical basis is said to be \emph{valid} on $U$.
\end{defn}

In Section \ref{gentropvarsec} it will be proved that generic tropical varieties exist and that every graded ideal has a generic tropical basis in the constant coefficient case.

\begin{rem}\label{exnonconstfalse}
Definition \ref{tropdef} can be formulated in the same way in the non-constant coefficient case, i.e. if the valuation $v$ on $K$ is non-trivial. In this case the initial form of a homogeneous polynomial $f\in K[x_1,\ldots,x_n]$ is defined by taking the valuations of the coefficients of $f$ into account; see e.g. \cite{SPST}. For example, for the linear form $f=g_{11}x+g_{12}y\in K[x,y]$, the initial form $\inom_{\omega}(f)$ is not a monomial, if and only if $v(g_{11})+\omega_1=v(g_{12})+\omega_2$.

This example suffices to show that the condition of Definition \ref{tropdef} will not be fulfilled in general in the constant coefficient case. We consider the ideal $I=(x)\subset K[x,y]$. Then $g(I)=(g_{11}x+g_{12}y)$, so if $g_{11},g_{12}\neq 0$, we get $$T(g(I))=\left\{\omega\in \R^2:v(g_{11})+\omega_1=v(g_{12})+\omega_2\right\}.$$ This affine subspace of $\R^2$ of course depends on the value of $v(g_{11})-v(g_{12})=v(\frac{g_{11}}{g_{12}})$ which will not the same for general $g_{11},g_{12}\in K$. Hence, there is no Zariski-open subset $U\subset \GL_2(K)$ such that $T(g(I))$ is the same set for every $g\in U$.
\end{rem}

\section{The Generic Gr\"obner Fan}\label{secgrofan}

In this section we show the existence of a ``generic Gr\"obner fan'' of a graded ideal $I \subset
S_K=K[x_1,\ldots,x_n]$.

Recall that $I$ has only finitely many initial ideals with respect
to term orders on the polynomial ring $K[x_1,\dots,x_n]$ and these
initial ideals correspond to the maximal cones in the Gr\"obner fan
of $I$. A \emph{universal Gr\"obner basis} of $I$ is a finite
generating set of $I$ which is a Gr\"obner basis of $I$ with respect
to every term order. Note that such a universal Gr\"obner basis
always exists. Indeed, choosing term orders $\succ_1,\dots,\succ_m$
such that $\inom_{\succ_1}(I),\dots,\inom_{\succ_m}(I)$ are all
initial ideals of $I$, then the union of all reduced Gr\"obner bases
of $I$ with respect to $\succ_i$ for $i=1,\dots,m$ is a universal
Gr\"obner basis of $I$; see for example \cite[Corollary
2.2.5]{MATH}.

Recall that $K'=K(y_{ij}:i,j=1,\ldots,n)$ as defined in Section \ref{basicconcepts}. We may identify term orders on $S_K$ with those on $S_{K'}=K'[x_1,\ldots,x_n]$. Moreover, we also
identify  monomial ideals in $S_K$ with those in
$K'[x_1,\ldots,x_n]$, since the monomials do not depend on the
ground field.

\begin{theorem}\label{groebfan}
Let $I\subset S_K$ be a graded ideal. There exists a Zariski-open subset $\emptyset\neq U\subset \GL_n(K)$ and polynomials $h_1(y),\ldots,h_s(y)\in  y(I)$ such that
\begin{enumerate}
\item $\Gc(y)=\left\{h_1(y),\ldots,h_s(y)\right\}$ is a universal Gr\"obner basis of $y(I)$.
\item For every $g\in U$ the set $\Gc(g)=\left\{h_1(g),\ldots,h_s(g)\right\}$ is a universal Gr\"ob\-ner basis of $g(I)$.
\item All these universal Gr\"ob\-ner bases have the same support.
\end{enumerate}
\end{theorem}

\begin{proof}
Let $J\subset K'[x_1,\ldots,x_n]$ be the image ideal $y(I)$ of $I$
under the $K$-algebra homomorphism $y$ as defined in Definition
\ref{gendef}. There exists only finitely many initial ideals
$\inom_1(J),\ldots,\inom_m(J)$ of $J$ with respect to term orders of
$K'[x_1,\dots,x_n]$. We choose a term order $\succ_i$ for each
initial ideal  $\inom_i(J)$ such that
$\inom_{\succ_i}(J)=\inom_i(J)$. Using the Buchberger Algorithm  we
can compute a reduced Gr\"obner basis $\Gc_i$ of $J$ with respect to
$\succ_i$. Let $\Gc(y)$ be the union of all these reduced Gr\"obner
bases $\Gc_i$ of $J$, i.e.\ a universal Gr\"obner basis of $J$. The
coefficients of all polynomials occurring throughout these
computations are themselves quotients of polynomials in the
variables $y_{ij}$. Now choose $U$ to be the non-empty Zariski-open
set of all $g\in\GL_n(K)$ such that all of the finitely many
numerators and denominators of the polynomials appearing during the
calculations in the algorithm are nonzero with respect to
any of the $\succ_i$. Then for any $g\in U$ the reduced Gr\"obner
basis $\Gc_i(g)$ of $g(I)$ with respect to $\succ_i$ is obtained by
evaluating the polynomials of $\Gc_i$ at $g$.

Now it remains to show that for $g\in U$ the union of the
$\Gc_i(g)$ is a universal Gr\"obner basis of $g(I)$. For this it is
enough to prove that every initial ideal of $g(I)$ is one of the
$\inom_1(J),\ldots,\inom_m(J)$.  Let $g\in U$ be fixed and $\succ$
be any term order and consider the initial ideal
$\inom_{\succ}(g(I))$. We know that $\inom_{\succ}(J)=\inom_{i}(J)$
for some $i\in\left\{1,\ldots,m\right\}$. This implies that the
reduced Gr\"obner basis $\Gc_i$ of $J$ with respect to $\succ_i$ is
also a reduced Gr\"obner basis of $J$ with respect to $\succ$; see
\cite[Corollary 2.2.5]{MATH}. Moreover, by the choice of $U$ we
know that $\Gc_i(g)$ is a reduced Gr\"obner basis of $g(I)$ with
respect to $\succ_i$ for $g\in U$. Since $\Gc_i$ and $\Gc_i(g)$ have
the same support, we know $\inom_{\succ}(y(f))=\inom_{\succ}(g(f))$
and $\inom_{\succ_i}(y(f))=\inom_{\succ_i}(g(f))$ for every $y(f)\in
\Gc_i$. We also know that
$\inom_{\succ}(y(f))=\inom_{\succ_i}(y(f))$, since
$\inom_{\succ}(J)=\inom_{\succ_i}(J)$ and $\Gc_i$ is reduced. But
then we get
\begin{eqnarray*}
\inom_{\succ_i}(g(I)) & = & (\inom_{\succ_i}(g(f)): g(f)\in\Gc_i(g))\\
                          & = & (\inom_{\succ_i}(y(f)): y(f)\in\Gc_i)\\
                          & = & (\inom_{\succ}(y(f)): y(f)\in\Gc_i)\\
                          & = & (\inom_{\succ}(g(f)): g(f)\in\Gc_i(g))
                          \subset  \inom_{\succ}(g(I)).
\end{eqnarray*}
However, both $\inom_{\succ_i}(g(I))$ and $\inom_{\succ}(g(I))$ are
initial ideals of the same ideal $g(I)$, and hence,
$\inom_{\succ}(g(I))=\inom_{\succ_i}(g(I))$.

This means that $\Gc(g)$ defined as the union of the $\Gc_i(g)$ for
$i=1,\ldots,m$ is a universal Gr\"obner basis of $g(I)$. Now
$\Gc(g)$ is obtained by evaluating the coefficients of the
polynomials in $\Gc$, and for $g\in U$ none of these coefficients
vanishes. Hence, all $\Gc(g)$ consist of polynomials which differ
only in the coefficients not equal to zero. So all $\Gc(g)$ for $g
\in U$ have the same support.
\end{proof}

Note that in particular this implies the well-known result that for a graded ideal $I\subset S_K$ there exist only finitely many generic initial ideals of $I$. As the Gr\"obner fan of $g(I)$ depends only on the support of the polynomials in the universal Gr\"obner basis, this also immediately implies the existence of a generic Gr\"obner fan.

\begin{cor}\label{gengrobi}
Every ideal $g(I)$ has the same Gr\"obner fan for every $g\in U$ for some non-empty open subset $U\subset \GL_n(K)$.
\end{cor}

Since every non-empty Zariski-open subset is dense in $\GL_n(K)$, the following definition makes sense.

\begin{defn}
The unique polyhedral fan that equals $\GF(g(I))$ for all $g$ in a non-empty Zariski-open subset of $\GL_n(K)$, is called the
\emph{generic Gr\"obner fan of $I$}. We denote this fan by $\gGF(I)$.
\end{defn}

We also state two Corollaries of Theorem \ref{groebfan} needed in Section \ref{gentropvarsec}.

\begin{cor}\label{ginlink}
Let $I\subset S_K$ be a graded ideal and $\succ$ a term order. Then $\inom_{\succ}(y(I))\subset S_{K'}$ and $\gin_{\succ}(I)\subset S_K$ have the same sets of minimal generators.
\end{cor}

\begin{proof}
The reduced Gr\"obner bases of $y(I)$ and $g(I)$ with respect to $\succ$ have the same support for every $g$ in a non-empty open subset of $\GL_n(K)$ by Theorem \ref{groebfan}.
\end{proof}

\begin{cor}\label{ginoffen}
Let $I\subset S_K$ be a graded ideal. Then there exists an open set $\emptyset\neq U\subset \GL_n(K)$ such that for every $\omega\in \R^n$, every term order $\succ$ and every $g\in U$ we have $\inom_{\succ}(\inom_{\omega}(g(I)))=\gin_{\succ_{\omega}}(I)$
\end{cor}

\begin{proof}
We claim that the set $U\subset \GL_n(K)$ from Theorem \ref{groebfan} has this property. Let $\omega\in\R^n$ and $\succ$ any term order. Let $\Gc(g)=\left\{h_1(g),\ldots,h_s(g)\right\}$ be the universal Gr\"obner basis of $g(I)$ with the same support for $g\in U$ existing by Theorem \ref{groebfan}. In particular, $\Gc(g)$ is a Gr\"obner basis of $g(I)$ with respect to $\succ_{\omega}$. Thus $\left\{\inom_{\omega}(h_1(g)),\ldots,\inom_{\omega}(h_s(g))\right\}$ is a Gr\"obner basis of $\inom_{\omega}(g(I))$  with respect to $\succ$. With Theorem \ref{groebfan} this implies
\begin{eqnarray*}
\inom_{\succ}(\inom_{\omega}(g(I))) & = & (\inom_{\succ}(\inom_{\omega}(h_1(g))),\ldots,\inom_{\succ}(\inom_{\omega}(h_s(g))))\\
                                    & = & (\inom_{\succ_{\omega}}(h_1(g)),\ldots,\inom_{\succ_{\omega}}(h_s(g)))\\
                                    & = & \inom_{\succ_{\omega}}(g(I))\\
                                    & = & \gin_{\succ_{\omega}}(I).
\end{eqnarray*}
\end{proof}

The generic Gr\"obner fan is symmetric with respect to coordinates in the following sense. Let $S_n$ denote the symmetric group of degree $n$. For $\sigma\in S_n$ and $\omega=(\omega_1,\ldots,\omega_n)\in \R^n$ we set $\sigma(\omega)=(\omega_{\sigma(1)},\ldots,\omega_{\sigma(n)})$. Moreover, $\sigma$ induces a $K$-algebra automorphism on $K[x_1,\ldots,x_n]$ by setting $\sigma(x_i)=x_{\sigma(i)}$. By abuse of notation this map will also be denoted by $\sigma$. For $g=(g_{ij})\in \GL_n(K)$ let $\sigma(g)=(g_{i\sigma^{-1}(j)})$. Hence, $\sigma(g)$ corresponds to a switching of the columns of the matrix of $g$. Note that with this notation for a graded ideal $I\subset K[x_1,\ldots,x_n]$ and $\sigma,\tau\in S_n$ we have

\begin{enumerate}
\item $\sigma(g(I))=\sigma(g)(I)$,
\item $\tau(\sigma(g))=(\sigma\circ\tau)(g))$.
\end{enumerate}

Furthermore, every non-empty Zariski-open subset of $\GL_n(K)$ contains an open subset which is symmetric with respect to renaming coordinates. This means that for an open set $\emptyset\neq U\subset \GL_n(K)$ we can choose an open set $\emptyset \neq V\subset U$ such that for every $\sigma\in S_n$ we have: $$g\in V\text{ implies } \sigma(g)\in V.$$

With this we can state a result on the symmetry of generic Gr\"obner fans.

\begin{prop}\label{symsym}
Let $I\subset K[x_1,\ldots,x_n]$ be a graded ideal and $\mathring{C}$ be a relatively open cone in $\gGF(I)$. Then
$$\sigma(\mathring{C})=\left\{\sigma(\omega): \omega\in \mathring{C} \right\}$$ is also a relatively open cone of $\gGF(I)$ for $\sigma \in S_n.$
\end{prop}

\begin{proof}
Let $\emptyset\neq V\subset \GL_n(K)$ be Zariski-open such that $\GF(g(I))=\gGF(I)$ for all $g\in V$ and such that $g\in V$ implies $\sigma(g)\in V$. Let $J$ be the initial ideal corresponding to $\mathring{C}$. Now we have $\omega\in \mathring{C}$ if and only if $\inom_{\omega}(g(I))=J$ for $g\in V$. As $\inom_{\sigma(\omega)}(\sigma(g(I)))$ is obtained from $\inom_{\omega}(g(I))$ by renaming coordinates, $\omega\in \mathring{C}$ is equivalent to $\inom_{\sigma(\omega)}(\sigma(g)(I))=\inom_{\sigma(\omega)}(\sigma(g(I)))=\sigma(J)$. Since $\sigma(g)\in V$, the ideal $\sigma(J)$ then also defines a cone of $\gGF(I)$. This cone contains exactly all $\sigma(\omega)$ for $\omega\in \mathring{C}$ in its relative interior.
\end{proof}

\section{Generic Tropical Varieties}\label{gentropvarsec}

The generic tropical variety of an ideal turns out to be closely connected to one particular fan in $\R^n$ which we describe first. Let $e_i$ denote the $i$th standard basis vector of $\R^n$ and $\cone(M)$ denote the positive hull of a set $M$.

\begin{defn}\label{gentropfandefn}
Let $\Wc_n$ be the fan in $\R^n$ consisting of the following closed cones: For each non-empty subset $A\subset \left\{1,\ldots,n\right\}$ let
$$C_A = \cone(\left\{e_i:i\notin A\right\})+\R (1,\ldots,1).$$ This fan will be called the \emph{generic tropical fan} in $\R^n$. The $t$-skeleton of $\Wc_n$ will be denoted by $\Wc_n^t$.
\end{defn}

Equivalently we can write $C_A=\left\{\omega\in \R^n: \omega_i=\min_k\left\{\omega_k\right\}\text{ for all } i\in A\right\}$. Note that the image of $\Wc_n$ in $\R^n/(1,\ldots,1)$ is a fan of the projective $(n-1)$-space as a toric variety.

For a $k$-dimensional cone $C_A$ of $\Wc_n$ the set $A$ has to have exactly $n-k+1$ elements. Thus the number of cones of dimension $k$ is equal to the number of possibilities to choose $n-k+1$ from $n$, which is $\binom{n}{n-k+1}=\binom{n}{k-1}$. Therefore, $\Wc_n$ has exactly $\binom{n}{k-1}$ cones of dimension $k$ for $k=1,\ldots,n$.

We now show that for an ideal $I\subset S_K=K[x_1,\ldots,x_n]$ with $\dim (S_K/I)=m$ generically the tropical variety is contained in the $m$-skeleton of the generic tropical fan. Recall the definition of the field $K'$ and the ideal $y(I)$ in $S_{K'}=K'[x_1,\ldots,x_n]$ from Definition \ref{gendef}.

\begin{lemma}\label{skeletor}
Let $I\subset S_K=K[x_1,\ldots,x_n]$ be a graded ideal with $\dim(S_K/I)=m<n$. Then there exist polynomials $f_1(y),\ldots,f_s(y)\in y(I)$, such that $\bigcap_{i=1}^s T(f_i(y))\subset \Wc_n^m$. In particular, $T(y(I))\subset \Wc_n^m$.
\end{lemma}

\begin{proof}
Since $y: S_K\rightarrow S_{K'}$ is a flat extension, we have $\dim(S_{K'}/y(I))=\dim(S_K/I)=m$.

In the case $m=0$ both $T(y(I))$ and $\Wc_n^m$ are empty, so let $m>0$. Let $\mathring{C}\in \GF(y(I))$ be a relatively open Gr\"obner cone of $y(I)$ such that $\mathring{C}\not\subset \Wc_n^m$. Choose $\omega\in \mathring{C}\backslash \Wc_n^m$, so the minimum of the coordinates of $\omega$ is attained at most $n-m$ times. Without loss of generality we may assume that $\min_i\left\{\omega_i\right\}=0$ and the first
$r$ coordinates $r\leq n-m$ attain the minimum.

Let $\succ$ be the lexicographic term order induced by $x_1\succ x_2\succ\ldots\succ x_n$ and let $\succ_{\omega}$ be the refinement of the partial order corresponding to $\omega$ with respect to $\succ$. Then $\gin_{\succ_{\omega}}(I)$ exists and we have $\dim (S_K/\gin_{\succ_{\omega}}(I))=\dim (S_K/I)=m$. In particular, $$\gin_{\succ_{\omega}}(I)\cap K[x_r,\ldots,x_n]\neq \left\{0\right\},$$ since otherwise $K[x_r,\ldots,x_n]$ would be subset of a Noether normalization of the ring $K[x_1,\ldots,x_n]/\gin_{\succ_{\omega}}(I)$ and therefore $\dim(S_K/I)\geq n-r+1\geq m+1$ which is a contradiction to the assumption $\dim(S_K/I)=m$.

Let $0\neq u\in \gin_{\succ_{\omega}}(I)\cap K[x_r,\ldots,x_n]$ be a monomial of total degree $t$. Since $\gin_{\succ_{\omega}}(I)$ is Borel-fixed, this implies $x_r^t\in \gin_{\succ_{\omega}}(I)$; see, e.g., \cite[Theorem 15.23]{EI}. Since $\gin_{\succ_{\omega}}(I)$ and $\inom_{\succ_{\omega}}(y(I))$ have the same minimal generators by Corollary \ref{ginlink}, we also have $x_r^t\in \inom_{\succ_{\omega}}(y(I))$. Let $f(y)\in y(I)$ such that $\inom_{\succ_{\omega}}(f(y))=x_r^t$. No term of $f(y)$ that
has the same $\omega$-weight as $x_r^t$ may contain a variable from
$x_1,\ldots,x_{r-1}$, since then $\inom_{\succ_{\omega}}(f(y))\neq
x_r^t$ in the chosen lexicographic term order. So every such term of
$f(y)$ apart from $x_r^t$ must be divisible by one of the variables
$x_{r+1},\ldots,x_n$. But then every term of $f(y)$ has
$\omega$-weight greater than zero, except $\wt_{\omega} (x_r^t)=0$.
Hence, $\inom_{\omega}(f(y))=x_r^t$ is a monomial. This implies $\omega\notin T(f(y))$. Thus $T(f(y))\subset \R^n\backslash \mathring{C}\cup \Wc_n^m$. Repeating this procedure for every Gr\"obner cone $C$ of $y(I)$ with $\mathring{C}\not\subset \Wc_n^m$ yields finitely many polynomials $f_1(y),\ldots,f_s(y)\in y(I)$ such that $\bigcap_{i=1}^s T(f_i(y))\subset \Wc_n^m$. By Lemma \ref{tropbasics} this implies $T(y(I))\subset \Wc_n^m$.
\end{proof}

\begin{cor}\label{skeletorg}
Let $I\subset S_K=K[x_1,\ldots,x_n]$ be a graded ideal with $\dim(S_K/I)=m<n$. Then there exists a non-empty open subset $U\subset \GL_n(K)$ such that for every $g\in U$ there is a set of polynomials $\left\{f_1(g),\ldots,f_s(g)\right\}\subset g(I)$ having the same support for every $g\in U$ with $\bigcap_{i=1}^s T(f_i(g))\subset \Wc_n^m$.
\end{cor}

\begin{proof}
Let $f_1(y),\ldots,f_s(y)\in y(I)$ be as in Lemma \ref{skeletor}. Choose $\emptyset\neq U\subset \GL_n(K)$ such that no numerator or denominator of the coefficients of the $f_i(y)$ vanishes, when the $g_{ij}$ are substituted for the $y_{ij}$. Then $\left\{f_1(g),\ldots,f_s(g)\right\}$ has the same support for $g\in U$. Moreover, $\bigcap_{i=1}^s T(f_i(g))\subset \Wc_n^m$ by Lemma \ref{skeletor} as a tropical hypersurface depends only on the support of its generator in the constant coefficient case.
\end{proof}

The next result is a converse to Corollary \ref{skeletorg}.

\begin{lemma}\label{diego}
Let $I\subset S_K$ be a graded ideal with $\dim(S_K/I)=m$. Then there exists an open subset $\emptyset\neq U\subset \GL_n({K})$ such that $\Wc_n^m\subset T(g(I))$ for every $g\in U$.
\end{lemma}

\begin{proof}
Let $\emptyset\neq U\subset \GL_n({K})$ be open, such that $\inom_{\succ}(\inom_{\omega}(g(I)))=\gin_{\succ_{\omega}}(I)$ for $g\in U$ for any $\omega\in \R^n$ and any term order $\succ$. Such a set exists by Corollary \ref{ginoffen}. We will show that the claim of the lemma holds for every $g\in U$.

Let $\omega\in \Wc_n^m$. For a fixed $g\in U$ let $P$ be a minimal prime of $\inom_{\omega}(g(I))$ with $\dim(S_K/P)=m$. Assume that $P$ contains a monomial. Since $P$ is prime, this implies that $P$ contains a variable $x_l$ for some $l$. Without loss of generality let $\omega_1=\ldots=\omega_{n-m+1}\leq\omega_j$ for $j>n-m+1$. To establish a contradiction let $\left\{i_1,\ldots,i_{n-m}\right\}\subset\left\{1,\ldots,n-m+1\right\}\backslash \left\{l\right\}$. Let $\succ$ be a lexicographic term order with $$x_{i_1}\succ x_{i_2}\succ \ldots \succ x_{i_{n-m}}\succ x_j \text{ for } j\notin \left\{i_1,\ldots,i_{n-m}\right\}.$$

By assumption we have $\gin_{\succ_{\omega}}(I)=\inom_{\succ}(\inom_{\omega}(g(I)))\subset \inom_{\succ}(P)$ with $$\dim(S_K/\gin_{\succ_{\omega}}(I))=\dim(S_K/\inom_{\succ}(P))=m.$$ Let $Q$ be a minimal prime of $\inom_{\succ}(P)$. Since the dimensions coincide, $Q$ is also a minimal prime of $\gin_{\succ_{\omega}}(I)$. But $\gin_{\succ_{\omega}}(I)$ has only one minimal prime which is $(x_{i_1},\ldots,x_{i_{n-m}})$ by the choice of the term order $\succ$ (see for example \cite[Corollary 15.25]{EI}). Hence, $Q$ does not contain $x_l$. This is a contradiction to the fact that $x_l\in P$ and therefore $x_l\in \inom_{\succ}(P)\subset Q$. Thus, $P$ cannot contain a monomial. Hence, $\inom_{\omega}(g(I))\subset P$ cannot contain a monomial implying $\omega\in T(g(I))$. Since this holds for every $g\in U$, this proves the claim.
\end{proof}

This implies the following characterization of generic tropical varieties as a set in the constant coefficient case.

\begin{theorem}\label{tropfansetcor}
Let $I\subset S_K=K[x_1,\ldots,x_n]$ be a graded ideal with $\dim(S_K/I)=m<n$. Then $\gT(I)$ exists and as a set $$\gT(I)=\Wc_n^m.$$ Moreover, there exists a generic tropical basis for $I$ (as in Definition \ref{gentropbasis}).
\end{theorem}

\begin{proof}
Let $\left\{f_1(g),\ldots,f_s(g)\right\}\subset g(I)$ be a finite set of polynomials having the same support for every $g$ in a non-empty open subset $U_1\subset \GL_n(K)$ such that $\bigcap_{i=1}^s T(f_i(g))\subset \Wc_n^m$ for every $g\in U_1$. This exists by Corollary \ref{skeletorg}. Moreover, let $\emptyset\neq U_2\subset \GL_n(K)$ be open such that $\Wc_n^m\subset T(g(I))$ for $g\in U_2$ existing by Lemma \ref{diego}. Then for $g\in U_1\cap U_2$ we have $$\Wc_n^m\subset T(g(I))\subset \bigcap_{i=1}^s T(f_i(g))\subset \Wc_n^m$$ implying $T(g(I))=\Wc_n^m$ for $g\in U_1\cap U_2$. Since $U_1\cap U_2$ is open, the generic tropical variety $\gT(I)$ exists and as a set is equal to $\Wc_n^m$.

In addition, let $\left\{h_1,\ldots,h_r\right\}$ be a set of generators of $I$. Let $U_3\subset \GL_n(K)$ be a non-empty open set such that the sets $\left\{g(h_1),\ldots,g(h_r)\right\}$ have the same support for every $g\in U_3$. Since $g(h_1),\ldots,g(h_r)$ generate $g(I)$ for every $g\in \GL_n(K)$ and by the equality $T(g(I))=\bigcap_{i=1}^s T(f_i(g))$ for $g\in U_1\cap U_2$, the set $$\left\{y(h_1),\ldots,y(h_r),f_1(y),\ldots,f_s(y)\right\}$$ is a tropical basis of $I$ valid on $U_1\cap U_2\cap U_3$.
\end{proof}

In particular, in the constant coefficient case the generic tropical variety of an ideal as a set depends only on its dimension. Moreover, as a Corollary we recover the statement of Bieri and Groves \cite{BIGR} that the Krull dimension of $S_K/I$ coincides with the topological dimension of $T(I)$ in the constant coefficient case in the generic situation.

\begin{cor}[Bieri and Groves]
Let $I\subset S_K$ be a graded ideal. Then there exists an open subset $\emptyset\neq U\subset \GL_n(K)$ such that $\dim(S_K/g(I))=\dim T(g(I))$ for every $g\in U$.
\end{cor}

\section{Examples}

We conclude this note with some examples of generic Gr\"obner fans and generic tropical varieties. We briefly discuss principal ideals and linear ideals.

To describe the generic tropical variety of principal ideals we first prove a simple auxiliary statement.

\begin{lemma}\label{puretermpoly}
For a given homogeneous polynomial $0\neq f\in S_K$ of
total degree $d$ we can find a non-empty Zariski-open set $U\subset
\GL_n(K)$ such that $g(f)$ contains all terms $P_k(g) x_k^d$ with
nonzero coefficients $P_k(g)$ for all $g\in U$.
\end{lemma}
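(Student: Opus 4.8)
The plan is to identify, for each $k\in\{1,\ldots,n\}$, the coefficient of $x_k^d$ in $g(f)$ as an explicit polynomial in the entries of $g$, check that this polynomial is not identically zero, and then take $V$ to be the intersection of the corresponding nonvanishing loci. To compute the coefficient, write $f=\sum_\nu a_\nu x^\nu$ with $|\nu|=d$ for every $\nu$ occurring. By Definition \ref{gendef} we have $g(x_i)=\sum_{j=1}^n g_{ij}x_j$, so in $g(x^\nu)=\prod_{i=1}^n(\sum_{j=1}^n g_{ij}x_j)^{\nu_i}$ the monomial $x_k^d$ arises exactly by selecting the term $g_{ik}x_k$ from each of the $d$ factors. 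Hence the coefficient $P_k(g)$ of $x_k^d$ in $g(f)$ equals $\sum_\nu a_\nu\prod_{i=1}^n g_{ik}^{\nu_i}$, that is, $f$ evaluated at the $k$-th column of the matrix $g$; in particular $P_k$ is a polynomial in the indeterminates $g_{ij}$ (and depends only on the $k$-th column). Since $f$ is homogeneous of degree $d$, so is $g(f)$, and $x_k^d$ genuinely occurs in $g(f)$ precisely with the coefficient $P_k(g)$.

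Next I would observe that $P_k$ is not the zero polynomial. Indeed $f\neq 0$ and $K$ is infinite, being algebraically closed, so the polynomial function $f$ does not vanish identically; picking a point of $K^n$ where it is nonzero and using it as the $k$-th column of some invertible matrix shows that $P_k$ does not vanish identically on $\GL_n(K)$. Therefore $V_k=\{g\in\GL_n(K):P_k(g)\neq 0\}$ is a non-empty Zariski-open subset of $\GL_n(K)$. Setting $V=\bigcap_{k=1}^n V_k$, and using that $\GL_n(K)$ is irreducible so that a finite intersection of non-empty Zariski-open subsets is again non-empty and Zariski-open, we obtain a set $V$ with the required property: for every $g\in V$ and every $k$ the polynomial $g(f)$ contains the term $P_k(g)x_k^d$ with $P_k(g)\neq 0$.

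There is no real obstacle in this argument. The only points that need a little care are keeping the row/column convention of Definition \ref{gendef} straight when extracting the coefficient, and noting that simultaneous nonvanishing over the finitely many $k$ is preserved under intersection because $\GL_n(K)$ is irreducible.
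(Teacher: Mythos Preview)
Your proof is correct and follows essentially the same route as the paper: you compute $P_k(g)=\sum_\nu a_\nu\prod_i g_{ik}^{\nu_i}$ as the coefficient of $x_k^d$ in $g(f)$ and intersect the nonvanishing loci. If anything you are slightly more careful than the paper in justifying that $P_k$ is not identically zero (by interpreting it as $f$ evaluated at the $k$-th column) and that the finite intersection stays non-empty.
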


\begin{proof}
Let $f=\sum_{\nu\in \N^n} a_{\nu} x_1^{\nu_1}\cdots x_n^{\nu_n}$
with $\sum_{i=1}^n \nu_i=d$. Then $$g(f)=\sum_{\nu\in \N^n} a_{\nu}
(\sum_{j=1}^n g_{1j} x_j)^{\nu_1}\cdots (\sum_{j=1}^n g_{nj}
x_j)^{\nu_n}.$$ So $g(f)$ contains the terms $(\sum_{\nu} a_{\nu}
g_{1k}^{\nu_1}\cdots g_{nk}^{\nu_n})  x_k^d$. Let $P_k(g)=\sum_{\nu}
a_{\nu} g_{1k}^{\nu_1}\cdots g_{nk}^{\nu_n}$. Because $f$ is not the
zero polynomial we can choose $U$ to be the set of all $g\in
\GL_n(K)$ with $P_k(g)\neq0$ for $k=1,\ldots,n$.
\end{proof}

\begin{prop}\label{prince}
Let $0\neq f \in S_K$ be a homogeneous polynomial.
Then:
\begin{enumerate}
\item $\gGF(f)$ is equal to the generic tropical fan $\Wc_n$.
\item $\gT(f)$ is equal to $\Wc_n^{n-1}$, the $(n-1)$-skeleton of the generic tropical fan.
\end{enumerate}
\end{prop}

\begin{proof}
We consider the Zariski-open set $\emptyset\neq U\subset \GL_n(K)$
such that $g(f)$ has the maximal number of terms for all $g\in U$,
i.e.\ $g$ is not a zero of any nonzero coefficient polynomial of the
terms in $g(f)$. In particular, by Lemma \ref{puretermpoly} we know
$P_k(g)\neq0$ for $k=1,\ldots,n$ for all $g\in U$. Since $g(f)$ is
homogeneous, this implies that $\inom_{\omega}(g(f))$ is exactly the
sum of those terms of $g(f)$, that contain only variables $x_i$ for
which $\omega_i=\min\left\{\omega_j: j=1,\ldots,n\right\}$. So for
$\omega,\omega'\in \R^n$ we have
$\inom_{\omega}(g(f))=\inom_{\omega'}(g(f))$ if and only if
$$\left\{i: \omega_i=\min\left\{\omega_j:
j=1,\ldots,n\right\}\right\}=\left\{i:
\omega'_i=\min\left\{\omega'_j: j=1,\ldots,n\right\}\right\}.$$
Hence, $\omega$ and $\omega'$ are in the same Gr\"obner cone of $g(I)$
if and only if they are in the same cone $\Wc_n$ for all $g\in U$
and we conclude $\gGF(f)=\Wc_n$.

For the computation of the generic tropical variety we note that
$\inom_{\omega}(g(f))$ is a monomial $P_k(g) x_k^d$ for $g\in U$, if
$\omega_k<\omega_j$ for all $j\neq k$. If the minimum on the other
hand is attained at least twice, then $\inom_{\omega}(g(f))$
contains at least the terms $P_k(g)x_k^d$ corresponding to the
minimal coordinates $k$ and therefore is not a monomial. So for all
$g\in U$ we conclude that $T(g(I))=\Wc_n^{n-1}$. So $\gT(I)=\Wc_n^{n-1}$.
\end{proof}

For linear ideals $I\subset S_K$, that is, ideals generated by linear forms, the tropical variety of $I$ just depends on the matroid of $I$ as observed in \cite{ST02}. This matroid $M(I)$ on $N=\left\{1,\ldots,n\right\}$ is defined by declaring the circuits to be the minimal subsets $A$ of $N$ such that there exists a linear form in $I$ supported in variables with indices in $A$. Tropical varieties of matroids have been studied in \cite{ARKL}.

We explicitly compute the generic Gr\"obner fan and the generic tropical variety of linear ideals $I$. These just depend on the dimension of $S_K/I$ as fans.

Let $I\subset S_K$ be linear. Then a matrix $A=(a_{ij})\subset K^{t\times n}$ will be called a \emph{matrix of $I$}, if there exist the linear forms $f_i=\sum_{j=1}^n a_{ij} x_j$, such that $I=(f_1,\ldots,f_t)$. Note that choosing different generators of $I$ by taking linear combinations of the original ones corresponds to Gaussian operations on a given matrix of $I$. If $I\subset S_K$ is a linear ideal and $A$ is a matrix of $I$, then $\rank A=n-\dim (S_K/I)$.

Let $\dim(S_K/I)=m$ and $J\subset N=\left\{1,\ldots,n\right\}$ with $\left|J\right|=n-m$. Let $A$ be a matrix of $I$. If the minor of $A$ corresponding to the columns indexed by $J$ is nonzero, we can consider the reduced form $A_J$ of $A$ with respect to $J$. By this we mean the matrix obtained from $A$ by performing Gaussian elimination such that the square matrix of the columns corresponding to indices in $J$ is the identity matrix. For example, for $J=\left\{1,\ldots,n-m\right\}$ we have

$$A_{J}=\begin{pmatrix}
1 &  \cdots  & 0 & * & \cdots & *\\
\vdots & \ddots & \vdots & \vdots & \vdots & \vdots\\
0 & \cdots  & 1 & * & \cdots & *
\end{pmatrix},$$

where the $*$ represent any element of $K$.

For the generic situation note that if $A\subset K^{r\times n}$ is the matrix of $I$ and $g\in \GL_n(K)$, then we can consider
$g$ as a matrix $g\in K^{n\times n}$ and observe that the matrix product $Ag\subset K^{r\times n}$ is exactly the matrix of $g(I)$.
This is true, since for the generator $f_i$ of $I$ we have $$g(f_i)=\sum_j a_{ij}g(x_j)=\sum_{j}\sum_k a_{ij}g_{jk} x_k
=\sum_{k} \left(\sum_j a_{ij}g_{jk}\right) x_k ,$$ so the coefficient of $x_k$ in $g(f_i)$ is exactly the product of the
$i$-th row of $A$ and the $k$-th column of $g$.

\begin{lemma}\label{minornotzero}
Let $A\in K^{r\times n}$ of $\rank r$. Then there is a non-empty
Zariski-open subset $U\subset \GL_n(K)$ such that
\begin{enumerate}
\item every $r\times r$ minor of $Ag$ is non-zero for every $g\in U$,
\item every entry $*$ on the right hand side of $(Ag)_{J}$ as above is non-zero for $g\in U$ for every $J\subset N$ with $\left|J\right|=r$.
\end{enumerate}
\end{lemma}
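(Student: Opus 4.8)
The plan is to exhibit, for each of the finitely many conditions, a proper Zariski-closed ``bad set'' in $\GL_n(K)$ and then intersect the resulting dense open complements. Recall that $\GL_n(K)$ is an open, hence irreducible, subvariety of the affine space $K^{n\times n}$, so any finite family of non-empty Zariski-open subsets has non-empty intersection; this is the same density principle already used in Section 3.

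First I would handle (i). Fix a subset $S\subset\{1,\ldots,n\}$ with $|S|=r$ and let $p_S(g)$ be the $r\times r$ minor of $Ag$ built from the columns indexed by $S$; this is a polynomial in the entries $g_{jk}$. The crucial claim is that $p_S\not\equiv 0$. Since $\rank A=r$, the linear map $A\colon K^n\to K^r$ is surjective; pick a basis $w_1,\ldots,w_r$ of $K^r$ and preimages $v_i\in A^{-1}(w_i)$. The $v_i$ are linearly independent because their images are, so they extend to a basis of $K^n$; let $g_0\in\GL_n(K)$ be the matrix whose columns indexed by $S$ are $v_1,\ldots,v_r$ and whose remaining columns are the other basis vectors. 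Then the $S$-columns of $Ag_0$ are $w_1,\ldots,w_r$, whence $p_S(g_0)\neq 0$. Thus $U_S=\{g\in\GL_n(K):p_S(g)\neq 0\}$ is non-empty and Zariski-open, and $U:=\bigcap_{|S|=r}U_S$ witnesses (i).

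For (ii) I expect the condition to follow for free on this same $U$. By (i) the minor $p_{\{1,\ldots,r\}}(g)$ is non-zero for $g\in U$, so the reduction of Definition \ref{matreddef} applies and $(Ag)_{\red}=[\,I_r\mid B(g)^{-1}C(g)\,]$, where $B(g)$ and $C(g)$ are the blocks of the first $r$ and last $n-r$ columns of $Ag$. By Cramer's rule the $(i,k)$-entry of $B(g)^{-1}C(g)$ equals $\det\!\big(B(g)\text{ with its }i\text{-th column replaced by the }k\text{-th column of }C(g)\big)\big/\det B(g)$; both numerator and denominator are, up to sign, $r\times r$ minors of $Ag$ (the numerator uses the $r$ columns $\{1,\ldots,r\}\setminus\{i\}\cup\{r+k\}$), hence non-zero on $U$. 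Therefore every entry $*$ on the right hand side of $(Ag)_{\red}$ is non-zero for $g\in U$.

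I expect the one slightly delicate point to be the surjectivity-and-lifting step proving $p_S\not\equiv 0$: one must choose the preimages $v_i$ so that they are themselves linearly independent --- which is automatic once their images $w_i$ are --- in order to complete them to a basis of $K^n$ and obtain a genuine element of $\GL_n(K)$. Everything else is routine linear algebra together with the standard fact that a finite intersection of non-empty Zariski-open subsets of $\GL_n(K)$ is again non-empty.
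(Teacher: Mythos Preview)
Your proof is correct and follows essentially the same route as the paper's: in both cases one shows that each $r\times r$ minor of $Ag$ is a non-zero polynomial in the $g_{ij}$, intersects the resulting open sets, and then deduces (ii) by recognising each entry $*_{ij}$ of $(Ag)_{\red}$ as (up to sign) the ratio of the minor with columns $\{1,\ldots,r\}\setminus\{i\}\cup\{j\}$ to the minor with columns $\{1,\ldots,r\}$. The only difference is in the non-vanishing step for (i): you construct an explicit $g_0\in\GL_n(K)$ via surjectivity of $A$, whereas the paper argues by contradiction using permutation matrices (if some $p_S$ vanished identically, it would vanish on all column permutations, forcing every $r\times r$ minor of $A$ itself to vanish); both are equally elementary.
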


\begin{proof}
The $r\times r$-minors of $Ag$ can be considered as polynomials in the $g_{ij}$. If one of these polynomials was the zero polynomial, that would mean, that the determinant of the corresponding submatrix is zero for all $g\in \GL_n(K)$, in particular for permutation matrices in $\GL_n(K)$ that swap columns of $A$. This implies that the determinant of all possible $r\times r$-submatrices of $A$ are zero and thus $\rank A<r$, which is a contradiction. So all $r\times r$-minors of $Ag$ are non-zero polynomials $\left\{f_1,\ldots,f_s\right\}$ in the $g_{ij}$. Thus we can choose $U$ as the set of all $g\in \GL_n(K)$ with $f_i(g)\neq 0$ for $i=1,\ldots,s$.

For the second statement we note that if every $r\times r$-minor of $Ag$ is non-zero, so is every $r\times r$-minor of $(Ag)_{J}$ for a fixed $J$, since Gaussian elimination preserves the rank of a matrix. So for $g\in U$ every $r\times r$-minor of $(Ag)_{J}$ is not zero. Now assume that some entry $*_{ij}$ for some $j\notin J$ of $(Ag)_{J}$ is equal to $0$. Consider the submatrix $B$ of $(Ag)_{J}$ consisting of the $r$ columns of $(Ag)_{J}$ corresponding to $J$, except that the $i$th column is replaced by the $j$th one. Then every entry in $i$th row of $B$ is zero, and thus $\det B=0$. But this is a contradiction to the fact that no $r\times r$-minor of $(Ag)_{J}$ is zero.
\end{proof}

The last statement together with \cite[Proposition 1.6]{ST} (or \cite[Proposition 1.4.4]{MATH}) shows that for a linear ideal $I$ with $\dim(S_K/I)=m$ generically the universal Gröbner basis consists of ${{n}\choose{m-1}}$ linear forms each supported on a different subset of size $n-m+1$ of $N$. Equivalently the matroid associated to $I$ is the uniform matroid of rank $n-m$ on $N$, see \cite[Example 9.13]{ST02}.

\begin{prop}\label{groebfanlinear}
Let $I\subset S_K$ be a linear ideal with $\dim (S_K/I)=m$.
\begin{enumerate}
\item
The generic Gr\"obner fan $\gGF(I)$ contains the following cones.
\begin{enumerate}
\item For $\omega\in \R^n$ with $\left\{i_1,\ldots,i_n\right\}=\left\{1,\ldots,n\right\}$ such that $$\omega_{i_1},\ldots,\omega_{i_{n-m}}<\omega_{i_{n-m+1}},\ldots,\omega_{i_n}$$ we have $$C[\omega]=\left\{\omega'\in\R^n: \omega'_{i_1},\ldots,\omega'_{i_{n-m}}<\omega'_{i_{n-m+1}},\ldots,\omega'_{i_n}\right\}.$$
\item For $\omega\in \R^n$ with $\left\{i_1,\ldots,i_n\right\}=\left\{1,\ldots,n\right\}$ such that $$\omega_{i_1},\ldots,\omega_{i_{n-m-t-1}}<\omega_{i_{n-m-t}}=\omega_{i_{n-m-t+1}}=\ldots=\omega_{i_{n-m+s}}<\omega_{i_{n-m+s+1}},\ldots,\omega_{i_n}$$ for $t\geq0,s\geq1$ we have that $C[\omega]$ is equal to the set $$\left\{\omega'\in\R^n:\omega'_{i_1},\ldots,\omega'_{i_{n-m-t-1}}<\omega'_{i_{n-m-t}}=\omega'_{i_{n-m-t+1}}=\ldots=\omega'_{i_{n-m+s}}<\omega'_{i_{n-m+s+1}},\ldots,\omega'_{i_n}\right\}.$$
\end{enumerate}
\item
The generic tropical variety $\gT(I)$ is equal to $\Wc_n^m$ as a fan.
\end{enumerate}
\end{prop}

\begin{proof}
Let $\omega\in \R^n$ such that after possibly renaming coordinates $\omega_1\leq\omega_2\leq\ldots\leq\omega_n$, and $\succ_{\omega}$ be a term order with $x_1\succ x_2\ldots\succ x_n$ which refines $\omega$. Let $A$ be a matrix of $I$ with $\rank A=r=n-m$. By \cite[Proposition 1.4.4]{MATH} the rows of the matrix $(Ag)_{J}$ for $J=\left\{1,\ldots,n-m\right\}$ are a reduced Gr\"obner basis of $g(I)$. For $g\in U$ as defined in Lemma \ref{minornotzero} the rows of $(Ag)_J$ correspond to linear forms $$l_i=x_i+\sum_{k=r+1}^n c_{ik} x_k$$ with $c_{ik}\neq 0$ for $i=1,\ldots, r$, $k=r+1,\ldots,n$. Now $\omega'\in \R^n$ is in the same Gr\"obner cone as $\omega$, if and only if $\inom_{\omega'} (l_i)=\inom_{\omega} (l_i)$ for $i=1,\ldots,r$. Since $\omega_1,\ldots,\omega_{n-m}\leq \omega_{n-m+1},\ldots,\omega_n$ this immediately implies $\omega'_1,\ldots,\omega'_{n-m}\leq \omega'_{n-m+1},\ldots,\omega'_n$. For every equality of some $\omega_i=\omega_k$ for $i\in\left\{1,\ldots,n-m\right\}$, $k\in \left\{n-m+1,\ldots,n\right\}$ the vector $\omega'$ has to fulfill the same equality such that $\inom_{\omega'}(l_i)=\inom_{\omega}(l_i)$. This completes the proof of the first part.

For the second statement we already know that $\gT(I)=\Wc_n^m$ as a set. On the other hand $\gT(I)$ is a subfan of the Gr\"obner fan $\gGF(I)$ as computed in Theorem \ref{groebfanlinear}. But $\Wc_n^m$ is a subfan of $\gGF(I)$, since the maximal cones of $\Wc_n^m$ are exactly the cones $$C=\left\{\omega\in\R^n:\omega_{i_1}=\ldots=\omega_{i_{n-m+1}}\leq\omega_{i_{n-m+2}},\ldots,\omega_{i_n}\right\}$$ of $\gGF(I)$. Hence $\gT(I)=\Wc_m^n$ as a fan.
\end{proof}

\begin{rem}
The second statement also follows from \cite{ARKL}, where Bergman fans of matroids are computed. In our case the matroid $M$ to consider is the uniform matroid of rank $n-m$ on $N$. The generic tropical variety of $\gT(I)$ is then the Bergman fan $\tilde{\Bc}(M)$ of \cite{ARKL} equipped with the coarse subdivision defined there.
\end{rem}

One implication of this is that the generic tropical variety of an ideal is generally not the $m$-skeleton of its generic Gr\"obner fan, since already for linear ideals $I$ the generic Gr\"obner fan $\gGF(I)$ has more $m$-dimensional cones than $\gT(I)$. In fact, for example the $m$-dimensional cone $C[\omega]$ with
$$
\omega_1<\omega_2=\dots=\omega_{n-m+2}<\omega_{n-m+3},\dots,\omega_n
$$
is an element of $\gGF(I)$, but not an element of $\gT(I)$.

\end{document}